\documentclass[12pt]{amsart}
\usepackage{amssymb}
\usepackage{hyperref}


\setlength{\voffset}{-1in}
\setlength{\topmargin}{1.5cm}
\setlength{\hoffset}{-1in}
\setlength{\oddsidemargin}{2cm}
\setlength{\evensidemargin}{2cm}
\setlength{\textwidth}{17cm}
\setlength{\textheight}{24cm}

\theoremstyle{plain}
\newtheorem{theorem}{Theorem}[section]
\newtheorem{proposition}[theorem]{Proposition}
\newtheorem{corollary}[theorem]{Corollary}
\newtheorem{lemma}[theorem]{Lemma}

\theoremstyle{definition}
\newtheorem{definition}[theorem]{Definition}
\newtheorem{example}[theorem]{Example}
\newtheorem{remark}[theorem]{Remark}

\theoremstyle{remark}

\numberwithin{equation}{section}

\setcounter{tocdepth}{1}

\newcommand{\N}{\mathbb N}
\newcommand{\Z}{\mathbb Z}

\newcommand{\C}{\mathbb C}


\newcommand{\SO}{\operatorname{SO}}
\newcommand{\SU}{\operatorname{SU}}

\newcommand{\Spin}{\operatorname{Spin}}

\newcommand{\Hom}{\operatorname{Hom}}

\newcommand{\tr}{\operatorname{tr}}

\newcommand{\ba}{\backslash}

\newcommand{\String}[1]{\mathcal{S}(#1)} 
\newcommand{\NN}{{|\Phi^+|}}
\newcommand{\NNg}{{|\Phi_\gamma^+|}}
\newcommand{\PP}{\mathcal{P}}

\newcommand{\dir}{\omega}

\title[Strong multiplicity one theorems]
{Strong multiplicity one theorems for locally homogeneous spaces of compact type}

\author{Emilio A. Lauret \and Roberto J. Miatello}

\address{Instituto de Matem\'atica (INMABB), Departamento de Matem\'atica, Universidad Nacional del Sur (UNS)-CONICET, Bah\'ia Blanca B8000CPB, Argentina.}
\email{emilio.lauret@uns.edu.ar}

\address{CIEM--FaMAF (CONICET), Universidad Nacional de C\'ordoba, Medina Allende s/n, Ciudad Universitaria, 5000 C\'ordoba, Argentina.}
\email{miatello@famaf.unc.edu.ar}

\subjclass[2010]{22C05, 22E46.}
\keywords{Strong multiplicity one theorem, right regular representation, representation equivalent}

\thanks{This research was supported by grants from CONICET, FONCyT and SeCyT. The first named author was supported by the Alexander von Humboldt Foundation}
\date{November 2019}

\begin{document}

\begin{abstract}
Let $G$ be a compact connected semisimple Lie group, let $K$ be a closed subgroup of $G$, let $\Gamma$ be a finite subgroup of $G$, and let $\tau$ be a finite dimensional  representation of $K$. 
For $\pi$ in the unitary dual $\widehat G$ of $G$, denote by $n_\Gamma(\pi)$ its multiplicity in $L^2(\Gamma\backslash G)$.
				
We prove a strong multiplicity one theorem in the spirit of Bhagwat and Rajan, for the $n_\Gamma(\pi)$ for $\pi$ in the set $\widehat G_\tau$ of irreducible $\tau$-spherical representations of $G$. 
More precisely, for $\Gamma$ and $\Gamma'$ finite subgroups of $G$, we prove that if $n_{\Gamma}(\pi)= n_{\Gamma'}(\pi)$ for all but finitely many $\pi\in \widehat G_\tau$, then $\Gamma$ and $\Gamma'$ are $\tau$-representation equivalent, that is, $n_{\Gamma}(\pi)=n_{\Gamma'}(\pi)$ for all $\pi\in \widehat G_\tau$. 

Moreover, when $\widehat G_\tau$ can be written as a finite union of strings of representations, we prove a finite version of the above result. 
For any finite subset $\widehat {F}_{\tau}$ of $\widehat G_{\tau}$ verifying some mild conditions, the values of the $n_\Gamma(\pi)$ for $\pi\in\widehat  F_{\tau}$ determine the $n_\Gamma(\pi)$'s for all $\pi \in \widehat G_\tau$.
In particular, for two finite subgroups $\Gamma$ and $\Gamma'$ of $G$, if $n_\Gamma(\pi) = n_{\Gamma'}(\pi)$ for all $\pi\in \widehat F_{\tau}$ then the equality holds for every $\pi \in \widehat G_\tau$. 
We use algebraic methods involving generating functions and some facts from the representation theory of $G$. 
\end{abstract} 
	
\maketitle

\section{Introduction}
Let $G$ be a Lie group and $\Gamma$ a discrete cocompact subgroup of $G$. 
The right regular representation $R_\Gamma$ of $G$ on $L^2(\Gamma\ba G)$  decomposes as a discrete direct sum of unitary irreducible representations $(\pi, V_\pi)$ of $G$ occurring with finite multiplicity. 
That is,
\begin{equation}\label{eq1:decomprightregrep}
L^2(\Gamma\ba G) \simeq \bigoplus_{\pi\in\widehat{G}} n_\Gamma(\pi)\,V_\pi,
\end{equation}
with $n_\Gamma(\pi)\in\N_0:=\N\cup\{0\}$ for any $\pi$ in the unitary dual  $\widehat G$ of $G$.

Now, let $K$ be a compact subgroup of $G$ and let $(\tau, V_\tau)$ be a finite dimensional representation of $K$.
A unitary representation $(\pi, V_\pi)$ of $G$ will be called \emph{$\tau$-spherical} if $\Hom_K(V_\tau,V_\pi)\neq 0$.
Let $\widehat G_{\tau}$ denote the set of $\tau$-spherical irreducible representations of $G$.
The Hilbert space 
\begin{equation}\label{eq1:decomprightregrep_tau}
L^2(\Gamma\ba G)_\tau :=  \bigoplus_{\pi\in\widehat{G}_\tau} n_\Gamma(\pi)\,V_\pi
\end{equation}
defines a unitary representation of $G$. 
	
Two discrete cocompact subgroups $\Gamma$ and $\Gamma'$ of $G$ are said to be \emph{representation equivalent} (resp.\ \emph{$\tau$-representation equivalent}) in $G$ if the representations $L^2(\Gamma\ba G)$ and $L^2(\Gamma'\ba G)$ (resp.\ $L^2(\Gamma\ba G)_\tau$ and $L^2(\Gamma'\ba G)_\tau$) are unitarily equivalent, that is, if $n_{\Gamma}(\pi)=n_{\Gamma'}(\pi)$ for every $\pi\in\widehat{G}$ (resp.\ $\pi\in\widehat{G}_\tau$).
	
C.~Bhagwat and C.S.~Rajan~\cite{BhagwatRajan11} studied spectral analogues of the so called \emph{strong multiplicity one theorems}.
They showed that if $\Gamma$ and $\Gamma'$ are discrete cocompact subgroups in a semisimple Lie group $G$ 
such that $n_{\Gamma}(\pi)=n_{\Gamma'}(\pi)$ for all but finitely many $\pi\in\widehat G$, then $\Gamma$ and $\Gamma'$ are representation equivalent in $G$. 
Furthermore, they proved a similar result for  $1_K$-spherical representations of $G$, when $G/K$ is a non-compact symmetric space. 
D.~Kelmer~\cite{Kelmer14} obtained refinements of the last result when $G/K$ has real rank one, for any finite-dimensional representation $\tau$ of $K$.
He replaced the finite set of exceptions by a possibly infinite set of sufficiently small density.
Both  Bhagwat-Rajan and Kelmer used analytic methods, the Selberg trace formula as a main tool.

The main goal of this article is to obtain strong multiplicity one type theorems in the case when $G$ is a compact semisimple Lie group.
We will use only algebraic methods and facts from the representation theory of compact Lie groups. 
Furthermore, we will not assume any restriction on $K$ nor that the discrete subgroups $\Gamma$ and $\Gamma'$ act freely on $G/K$, so the quotient spaces $\Gamma\ba G/K$ and $\Gamma'\ba G/K$ are compact good orbifolds.
In this context, $\Gamma$ is necessarily finite and, by Frobenius reciprocity, $n_\Gamma(\pi)$ coincides with the dimension of the subspace of $\Gamma$-invariant elements of $V_\pi$.
We will show that under certain conditions on $G$, $K$, $\tau$, a finite subset of the multiplicities $n_\Gamma(\pi)$ determines all multiplicities for $\pi \in \widehat G_\tau$.

A main tool for us will be the notion of {what we shall call a } \emph{string} of irreducible representations of $G$.
Fix a maximal torus of $G$ and a positive system in the associated root system. 
By the highest weight theorem, the irreducible representations of $G$ are in correspondence with the elements in the set of $G$-integral dominant weights $\PP^{+}(G)$.
For $\Lambda\in \PP^{+}(G)$, we denote by $\pi_{\Lambda}$ the  irreducible representation of $G$ with highest weight $\Lambda$.
A string is a sequence of representations of the form $\{\pi_{\Lambda_0+k\dir}:k\in \N_0\}$ with $\dir,\Lambda_0\in\PP^{+}(G)$.
We call $\dir$ the direction and $\Lambda_0$ the base of the string.

The usefulness of the notion of string of representations is that one can study the multiplicities $n_\Gamma(\pi_{\Lambda_0+k\dir})$ by means of the generating function 
\begin{equation}
F_{\dir,\Lambda_0,\Gamma}(z):=\sum_{k\geq0}  n_\Gamma(\pi_{\Lambda_0+k\dir})\, z^k.
\end{equation}
By using a version of the Weyl character formula, in Proposition~\ref{prop2:F_Gamma-rational} we prove that $F_{\dir, \Lambda_0,\Gamma}(z)$ is a rational function with denominator $(1- z^{|\Gamma|})^{\NN+1}$, where $\NN$ stands for the number of positive roots.

Let $q$ be any positive integer divisible by $|\Gamma|$.
As a consequence of the rational form of $F_{\dir,\Lambda_0,\Gamma}(z)$, Proposition~\ref{lem3:SMOTstrings} shows that if a finite subset $\mathcal A\subset \N_0$ satisfies that 
\begin{equation}\label{eq1:finitecondition}
|\mathcal A\cap (j+q\Z)| \geq \NN+1  \quad \text{for all } 0\leq j\leq q-1, 
\end{equation}
then the coefficients $n_\Gamma(\pi_{\Lambda_0+k\dir})$ for $k\in\mathcal A$ determine $n_\Gamma( \pi_{\Lambda_0+k\dir})$ for all $k\geq0$. 
For instance, \eqref{eq1:finitecondition} holds when $\mathcal A$ contains any interval of length $q(\NN+1)$.
A bonus in the present context is that the rationality of the generating function allows  to obtain an expression for any $n_\Gamma( \pi_{\Lambda_0+k\dir})$ as a linear combination of the $\{n_\Gamma(\pi_{\Lambda_0+k'\dir}): k'\in\mathcal A\}$.

As a corollary of Proposition~\ref{lem3:SMOTstrings} we obtain a strong multiplicity one theorem for strings of representations, valid with a possible infinite set of exceptions of sufficiently small density (see Corollary~\ref{cor3:density2Gammas}). 
We also prove strong multiplicity one theorems for $\tau$-spherical representations.

\begin{theorem}\label{thm1:SMOTtau}
Let $G$ be a compact connected semisimple Lie group, let $K$ be a closed subgroup of $G$ and let $\tau$ be a finite dimensional representation of $K$.
If $\Gamma$ and $\Gamma'$ are finite subgroups of $G$ such that $n_{\Gamma}(\pi)=n_{\Gamma'}(\pi)$ for all but finitely many $\pi\in\widehat G_\tau$ then $\Gamma$ and $\Gamma'$ are $\tau$-representation equivalent  in $G$ (i.e.\ $n_{\Gamma}(\pi)=n_{\Gamma'}(\pi)$ for all $\pi\in\widehat G_\tau$). 
\end{theorem}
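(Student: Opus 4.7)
The strategy is to reduce Theorem~\ref{thm1:SMOTtau} to the one-parameter version Corollary~\ref{cor3:density2Gammas}. Let $\mathcal E := \{\pi \in \widehat G_\tau : n_\Gamma(\pi) \neq n_{\Gamma'}(\pi)\}$; by hypothesis $|\mathcal E| < \infty$ and we must show $\mathcal E = \emptyset$. We may assume $K \subsetneq G$, since otherwise $\widehat G_\tau$ is finite and there is nothing to prove.

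Pick any $\pi_\Lambda \in \widehat G_\tau$; the goal is to exhibit a direction $\omega \in \PP^+(G)$ for which the string $\{\pi_{\Lambda + k\omega} : k \geq 0\}$ lies entirely inside $\widehat G_\tau$. The proposed choice is a nonzero highest weight $\omega$ with $\pi_\omega$ being $(G,K)$-spherical, i.e., $[\pi_\omega|_K : \triv_K] \geq 1$; such weights exist in abundance since they label the irreducible constituents of $L^2(G/K)$. After choosing $\omega$ in a suitable sub-cone so that $\triv_K \subset V_{k\omega}|_K$ for all $k \geq 0$, one combines this with $V_\tau \subset V_\Lambda|_K$ and the Cartan-component embedding $V_{\Lambda+k\omega} \hookrightarrow V_\Lambda \otimes V_{k\omega}$ to exhibit $V_\tau \cong V_\tau \otimes \triv_K$ as a $K$-subrepresentation of $V_{\Lambda + k\omega}$, placing $\pi_{\Lambda + k\omega} \in \widehat G_\tau$ for every $k \geq 0$.

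Granted the string lies in $\widehat G_\tau$, it meets $\mathcal E$ in only finitely many indices; for $q = \mathrm{lcm}(|\Gamma|, |\Gamma'|)$ the set $\mathcal A = \{k \geq 0 : \pi_{\Lambda + k\omega} \notin \mathcal E\}$ is cofinite in $\N_0$, hence contains an interval of length $q(\NN+1)$ and satisfies condition~\eqref{eq1:finitecondition}. Corollary~\ref{cor3:density2Gammas} then yields $n_\Gamma(\pi_{\Lambda+k\omega}) = n_{\Gamma'}(\pi_{\Lambda+k\omega})$ for every $k \geq 0$; specializing to $k = 0$ gives $\pi_\Lambda \notin \mathcal E$. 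Since $\pi_\Lambda \in \widehat G_\tau$ was arbitrary, $\mathcal E = \emptyset$.

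The main obstacle is the final $K$-type identification. Although $V_\tau \otimes \triv_K$ lies inside $(V_\Lambda \otimes V_{k\omega})|_K$ by construction, the full $G$-decomposition $V_\Lambda \otimes V_{k\omega} = V_{\Lambda + k\omega} \oplus \bigoplus_{\nu} m_\nu V_\nu$ could a priori consign this copy of $V_\tau$ to a non-Cartan summand. Overcoming this should rely on either a Borel--Weil style construction producing a nonzero $K$-equivariant map $V_\tau \to V_{\Lambda + k\omega}$, or use of the asymptotic quasi-polynomial behaviour of the branching multiplicities $k \mapsto [V_{\Lambda + k\omega}|_K : V_\tau]$ --- if not identically zero in a given residue class these are positive for all large $k$ in that class, and a density refinement of Corollary~\ref{cor3:density2Gammas} would still close the argument.
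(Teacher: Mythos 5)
Your overall strategy coincides with the paper's: through every $\pi_\Lambda\in\widehat G_\tau$ one runs a string in a fixed spherical direction $\omega$, the hypothesis makes the exceptional set finite (hence cofinite in its complement) on that string, and Corollary~\ref{cor3:density2Gammas} (or Proposition~\ref{lem3:SMOTstrings} applied to the cofinite set $\mathcal A$) forces equality at $k=0$. This is exactly the paper's reduction via Lemma~\ref{strings}. The difficulty is that the one step you explicitly leave open --- that $\pi_{\Lambda+k\omega}\in\widehat G_\tau$ for \emph{all} $k\geq 0$ --- is the entire content of Lemma~\ref{strings}, which the paper settles by invoking Kostant's theorem \cite[Thm.~3.9]{Kostant04} (and for which the authors credit outside help in the acknowledgments). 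As you correctly observe, exhibiting $V_\tau\otimes\triv_K$ inside $(V_\Lambda\otimes V_{k\omega})|_K$ does not suffice, because that copy of $V_\tau$ could a priori be lost under projection to the Cartan component. So, as written, the proposal has a genuine gap precisely at its crux.

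The gap is fillable along the first route you name, and it is worth recording how. Realize $V_\mu=H^0(G/B,L_\mu)$ by Borel--Weil; multiplication of sections $H^0(L_\Lambda)\otimes H^0(L_{k\omega})\to H^0(L_{\Lambda+k\omega})$ is a nonzero $G$-map onto an irreducible representation, hence is, up to scalar, the Cartan projection. The spherical highest weights form a monoid (a product of nonzero $K$-invariant sections is a nonzero $K$-invariant section), so $k\omega$ is spherical for every $k$ and there is a nonzero $K$-invariant $t\in H^0(L_{k\omega})$. If $W\subset H^0(L_\Lambda)$ is a $K$-subrepresentation isomorphic to an irreducible constituent of $V_\tau$, then $s\mapsto s\cdot t$ is $K$-equivariant and injective, since a product of nonzero sections on the irreducible variety $G/B$ is nonzero; hence that constituent embeds $K$-equivariantly in $V_{\Lambda+k\omega}$ and $\pi_{\Lambda+k\omega}\in\widehat G_\tau$. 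With that supplied, your argument closes and agrees with the paper's. (Minor aside: your reduction ``if $K=G$ then $\widehat G_\tau$ is finite and there is nothing to prove'' is backwards --- when $\widehat G_\tau$ is finite it is the \emph{hypothesis}, not the conclusion, that becomes vacuous; the statement implicitly takes $K$ proper, in which case $\widehat G_\tau$ is empty or infinite.)
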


A key point in the proof of the previous theorem is the fact that $\widehat G_\tau$ can be always written as a (non-necessarily finite) union of strings (see Lemma~\ref{strings}).	
Under the stronger assumption that $\widehat G_\tau$ can be written as a finite union of strings, we obtain the following stronger result.

\begin{theorem}\label{thm1:finiteSMOTtau}
Let $G$ be a compact connected semisimple Lie group, let $K$ be a closed subgroup of $G$,  and let $\tau$ be a finite dimensional representation of $K$.
Assume there are $G$-integral dominant weights $\Lambda_{0,i}$ and  $\dir_i$ for $1\leq i\leq m$ such that 
\begin{equation}\label{eq:hypothesis}
\widehat G_\tau = \bigcup_{i=1}^m \; \{ \pi_{\Lambda_{0,i}+k\dir_i} :k\geq0 \}.
\end{equation}
Given an integer $q>0$, let $\widehat F_\tau$ be any finite subset of $\widehat G_\tau$ such that $\mathcal A_i:=\{k\in\N_0: \pi_{\Lambda_{0,i}+k\dir_i}\in \widehat F_\tau\}$ satisfies \eqref{eq1:finitecondition} for each $1\leq i\leq m$. 
Then, for any  finite subgroup $\Gamma$  of $G$ with $q$ divisible by $|\Gamma|$,  the finite set of multiplicities $n_{\Gamma}(\pi)$ for $\pi\in\widehat F_\tau$ determine the $n_{\Gamma}(\pi)$ for all $\pi\in\widehat G_\tau$. 
In particular, if $\Gamma$ and $\Gamma'$ are finite subgroups of $G$ with $q$ divisible by $|\Gamma|$ and $|\Gamma'|$ such that $n_{\Gamma}(\pi)=n_{\Gamma'}(\pi)$ for all   $\pi\in\widehat F_{\tau}$, then $\Gamma$ and $\Gamma'$ are $\tau$-representation equivalent in $G$.
\end{theorem}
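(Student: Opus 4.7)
The strategy is to reduce to Proposition~\ref{lem3:SMOTstrings}, applied independently to each of the $m$ strings appearing in the decomposition \eqref{eq:hypothesis}. Fix a finite subgroup $\Gamma$ of $G$ with $|\Gamma|$ dividing $q$. For each $1\leq i\leq m$, Proposition~\ref{prop2:F_Gamma-rational} says that $F_{\dir_i,\Lambda_{0,i},\Gamma}(z)$ is rational with denominator $(1-z^{|\Gamma|})^{\NN+1}$. Since $|\Gamma|\mid q$, the polynomial $1-z^{|\Gamma|}$ divides $1-z^q$, so after multiplying numerator and denominator by a suitable polynomial we may rewrite
\[
F_{\dir_i,\Lambda_{0,i},\Gamma}(z) = \frac{P_i(z)}{(1-z^q)^{\NN+1}}
\]
for some polynomial $P_i(z)$. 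This is exactly the form required to feed the $i$-th string into Proposition~\ref{lem3:SMOTstrings} with this particular choice of $q$.

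By hypothesis each $\mathcal{A}_i$ satisfies \eqref{eq1:finitecondition} relative to the common $q$, so invoking Proposition~\ref{lem3:SMOTstrings} on each string separately, the values $\{n_\Gamma(\pi_{\Lambda_{0,i}+k\dir_i}):k\in\mathcal{A}_i\}$ determine $n_\Gamma(\pi_{\Lambda_{0,i}+k\dir_i})$ for all $k\geq0$, via an explicit linear combination whose coefficients depend only on $\mathcal{A}_i$, $q$ and $\NN$ --- crucially, not on $\Gamma$. Combining over $i$ and using \eqref{eq:hypothesis}, the multiplicities $n_\Gamma(\pi)$ for $\pi\in\widehat F_\tau$ determine $n_\Gamma(\pi)$ for every $\pi\in\widehat G_\tau$. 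The final assertion is then immediate: if $q$ is divisible by both $|\Gamma|$ and $|\Gamma'|$, and $n_\Gamma(\pi)=n_{\Gamma'}(\pi)$ for every $\pi\in\widehat F_\tau$, applying the same group-independent reconstruction formula to each side gives $n_\Gamma(\pi)=n_{\Gamma'}(\pi)$ for every $\pi\in\widehat G_\tau$, which is $\tau$-representation equivalence.

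\textbf{Anticipated obstacle.} Given the heavy lifting already done in Proposition~\ref{lem3:SMOTstrings}, this theorem is essentially a bookkeeping exercise; the real work sits in the rationality statement of Proposition~\ref{prop2:F_Gamma-rational} and the resulting reconstruction formula. The only mild subtlety is that the strings in \eqref{eq:hypothesis} may overlap --- a given $\pi\in\widehat G_\tau$ can arise as $\pi_{\Lambda_{0,i}+k\dir_i}$ for several pairs $(i,k)$ --- but this is harmless because each independent reconstruction along a string returns the same correct value $n_\Gamma(\pi)$, so the numbers computed via different strings are automatically consistent.
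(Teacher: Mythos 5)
Your proposal is correct and follows essentially the same route as the paper: apply Proposition~\ref{lem3:SMOTstrings} to each of the $m$ strings separately (the divisibility $|\Gamma|\mid q$ being exactly the hypothesis under which that proposition is stated) and combine via the decomposition \eqref{eq:hypothesis}. Your remarks on rewriting the denominator as $(1-z^q)^{\NN+1}$ and on the harmlessness of overlapping strings are accurate elaborations of points the paper leaves implicit.
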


\begin{remark}
The assumption in the previous theorem holds for Gelfand pairs of rank one, in particular for compact symmetric spaces of real rank one (see Remark~\ref{rem:finiteunion}). 
\end{remark}

A motivation of the authors for the questions treated in this paper are the applications to spectral geometry of locally homogeneous spaces. 
In fact, Theorem~\ref{thm1:finiteSMOTtau} is a main tool in \cite{LM-repequiv2}, where the authors study the strong multiplicity one property for the $\tau$-spectrum of a space covered by a compact symmetric space of real rank one and the connection between $\tau$-isospectrality and $\tau$-representation equivalence.

The article is organized as follows.
In Section~\ref{sec:generatingfunctions} we show that the generating function $F_{\dir,\Lambda_0,\Gamma}(z)$ is a rational function. 
The strong multiplicity one theorems, including 
Theorems~\ref{thm1:SMOTtau} and \ref{thm1:finiteSMOTtau}, are proved in Section~\ref{sec:SMOT}.

\subsection*{Acknowledgments}

The authors wish to thank T.N.~Venkataramana for very useful comments (via MathOverflow) regarding the proof of Lemma~\ref{strings}.

\section{Generating functions of strings} \label{sec:generatingfunctions}
Let $G$ be a compact connected semisimple Lie group, let $T$ be a maximal torus in $G$ and let $W=W(G,T)$ denote the corresponding Weyl group. 
Let $\Phi=\Phi(G,T)$ be the associated root system and let $\PP(G)$ be the lattice of $G$-integral weights. 
We fix a system of positive roots $\Phi^+=\Phi^+(G,T)$ and we let $\PP^{+}(G)$ be the set of dominant $G$-integral weights.
If $(\pi, V_\pi)$ is a finite dimensional representation of $G$,  denote by $\chi_\pi$ the character of $\pi$,  $\chi_\pi (g)= \tr \pi(g)$, for any $g\in G$. 
If $\Lambda\in \PP^{+}(G)$, let $\pi_{\Lambda}$ be the irreducible representation of $G$ with highest weight $\Lambda$.

\begin{definition}\label{def2:strings}
For $\dir,\Lambda_0\in \PP^{+}(G)$, we call the ordered set $\String{\dir, \Lambda_0} := \{\pi_{\Lambda_0+k\dir}:k\in \N_0\}$ the \emph{string of representations} associated to $(\dir,\Lambda_0)$.
The elements $\dir$ and $\Lambda_0$ will be called the \emph{direction} and the \emph{base} of the string respectively. 
\end{definition}

Let $\Gamma$ be a discrete, hence finite, subgroup of $G$. 
By Frobenius reciprocity, one has $n_{\Gamma}(\pi)=\dim V_\pi^\Gamma$ for any $\pi\in\widehat G$, where $V_\pi^\Gamma$ stands for the subspace of $V_{\pi}$ invariant by $\Gamma$. 
In order to study these numbers, we will encode them in a generating function  for representations lying in a string. 
More precisely, given a $(\dir,\Lambda_0)$-string $\String{\dir,\Lambda_0}$, we define the generating function  
\begin{equation}\label{eq2:F_Gamma(z)}
F_{\dir,\Lambda_0,\Gamma}(z) = \sum_{k\in \N_0} \dim V_{\pi_{\Lambda_k}}^\Gamma \, z^k,
\end{equation}
where $\Lambda_k=\Lambda_0+k\dir$ for any $k\geq0$.
From now on we will abbreviate $F_{\Gamma} (z) = F_{\dir,\Lambda_0,\Gamma}(z)$.

The next result is the main goal of this section. It  will be a main tool in the sequel.

\begin{proposition}\label{prop2:F_Gamma-rational}
Write $q=|\Gamma|$.
In the notation above,  there exists a complex polynomial $p(z)=p_{\dir,\Lambda_0,\Gamma}(z)$ of degree less than $q(\NN +1)$ such that
\begin{equation*}
  F_{\Gamma}(z) = \frac{p(z)}{(1-z^q)^{\NN+1}}.
\end{equation*}
\end{proposition}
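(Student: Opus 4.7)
The plan is to show that $k \mapsto \dim V_{\pi_{\Lambda_k}}^\Gamma$ is a quasi-polynomial in $k$ of period dividing $q$ and degree at most $\NN$, and then invoke a standard generating-function computation: any quasi-polynomial $a_k = \sum_{\zeta^q=1} p_\zeta(k)\zeta^k$ with $\deg p_\zeta \le \NN$ satisfies $(1-z^q)^{\NN+1}\sum_{k\ge 0} a_k z^k = p(z)$ for a polynomial $p$ of degree at most $q(\NN+1)-1$, since the finite-difference operator of order $\NN+1$ with step $q$ annihilates each $p_\zeta(k)$. The starting point is the orthogonality identity $\dim V_{\pi_{\Lambda_k}}^\Gamma = \frac{1}{|\Gamma|}\sum_{\gamma\in\Gamma}\chi_{\pi_{\Lambda_k}}(\gamma)$. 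Each $\gamma$ is conjugate in $G$ to some $t_\gamma \in T$, and since $\gamma^q = e$ one has $t_\gamma^q = e$; it therefore suffices to establish the quasi-polynomial property, with period dividing $q$ and degree at most $\NN$, for the function $k\mapsto \chi_{\pi_{\Lambda_0+k\dir}}(t)$ at each $t \in T$ with $t^q = e$.

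When $t$ is regular, the Weyl character formula immediately gives $\chi_{\pi_{\Lambda_k}}(t) = \Delta(t)^{-1}\sum_{w\in W}(-1)^w e^{w(\Lambda_0+\rho)}(t)\,\zeta_w^k$, where $\zeta_w := e^{w\dir}(t)$ is a $q$-th root of unity, so the character is a quasi-polynomial in $k$ of degree $0$. For singular $t$, I would pass to a regular nearby point: set $\Phi_\gamma := \{\alpha \in \Phi : e^\alpha(t) = 1\}$ with Weyl group $W_\gamma$, choose a regular $X \in \mathfrak t$, and use $\chi_{\pi_\Lambda}(t) = \lim_{\epsilon\to 0^+} A_{\Lambda+\rho}(t\exp(\epsilon X))/A_\rho(t\exp(\epsilon X))$. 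Decomposing $W$ into right $W_\gamma$-cosets and exploiting the factorization
\[
\sum_{u\in W_\gamma}(-1)^u e^{u\mu(H)} = \prod_{\alpha\in\Phi_\gamma^+}(e^{\alpha(H)/2}-e^{-\alpha(H)/2})\cdot g_\mu(H),
\]
where $g_\mu$ is $W_\gamma$-invariant and satisfies $g_\mu(0) = \prod_{\alpha\in\Phi_\gamma^+}\langle\mu,\alpha\rangle/\langle\rho_\gamma,\alpha\rangle$, the common factor $\epsilon^{\NNg}$ cancels between numerator and denominator. In the limit one obtains the Harish-Chandra-type formula
\[
\chi_{\pi_\Lambda}(t) = C_t^{-1}\sum_{[w]\in W/W_\gamma}(-1)^w e^{w(\Lambda+\rho)}(t)\prod_{\alpha\in\Phi_\gamma^+}\frac{\langle w(\Lambda+\rho),\alpha\rangle}{\langle\rho_\gamma,\alpha\rangle},
\]
with $C_t$ a nonzero constant depending only on $t$. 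Substituting $\Lambda = \Lambda_0 + k\dir$, each summand is a constant times $\zeta_w^k$ times a polynomial in $k$ of degree $\NNg \le \NN$, confirming the claim.

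The main obstacle is precisely this singular case: one must verify that the $\epsilon^{\NNg}$ Taylor factors on numerator and denominator have matching nonzero leading coefficients after the $W_\gamma$-coset grouping, so that the ratio admits an honest limit and the polynomial dependence on $\Lambda$ is of degree exactly $\NNg$. Once this character identity is in hand, the quasi-polynomial property of $k\mapsto\chi_{\pi_{\Lambda_k}}(t_\gamma)$ transfers to $\dim V_{\pi_{\Lambda_k}}^\Gamma$ by averaging over $\gamma \in \Gamma$, and the generating-function lemma cited above then yields $F_\Gamma(z) = p(z)/(1-z^q)^{\NN+1}$ with $\deg p < q(\NN+1)$, as required.
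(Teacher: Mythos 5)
Your proposal is correct and follows essentially the same route as the paper: average the characters over $\Gamma$, evaluate $\chi_{\pi_{\Lambda_0+k\dir}}$ at the torsion elements $t_\gamma\in T$ via a Weyl character formula valid at singular elements (yielding a root of unity $t_\gamma^{w\dir}$ to the power $k$ times a polynomial in $k$ of degree $\NNg\le\NN$), and then sum the resulting quasi-polynomial generating function over the common denominator $(1-z^q)^{\NN+1}$. The only difference is that the step you flag as the main obstacle --- the limit formula at singular $t$ --- is simply imported by the paper from Chenevier--Renard \cite[Prop.~2.3]{ChenevierRenard15}, so no gap remains.
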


The rest of this section will be devoted to give a proof of this result.
One has that
\begin{equation}\label{eq2:F_Gamma(z)2}
F_{\Gamma}(z)
    = \frac{1}{|\Gamma|}\sum_{\gamma\in\Gamma}  \sum_{k\in \N_0} \chi_{\pi_{\Lambda_k}}(\gamma)  z^k,
\end{equation}
since $\dim V_{\pi_{\Lambda_k}}^\Gamma = \frac{1}{|\Gamma|}\sum_{\gamma\in\Gamma} \chi_{\pi_{\Lambda_k}}(\gamma)$.
Our strategy will be to compute the terms $\chi_{\pi_{\Lambda_k}}(\gamma)$ for each $\gamma\in\Gamma$ by using a version of the Weyl character formula (see \cite[\S2.2]{ChenevierRenard15}).
To state this result we need to introduce some more notation.

For fixed $t\in T$, we let $Z=Z_t=C_G(t)^0$ be the identity component of the centralizer of $t$ in $G$, that is a compact subgroup of $G$.
Let $\Phi(Z,T)$ be the root system associated to $(Z,T)$, and let $\Phi_Z^+=\Phi^+\cap \Phi(Z,T)$, $W^Z=\{\sigma\in W: \sigma^{-1}\Phi_Z^+\subset \Phi^+\}$, $\rho=\frac12\sum_{\alpha\in\Phi^+}\alpha$, $\rho_Z=\frac12\sum_{\alpha\in\Phi_Z^+}\alpha$, and
\begin{equation}\label{eq2:P_Z(v)}
p_Z(v) = \prod_{\alpha\in\Phi_Z^+} \frac{\langle\alpha, v+\rho_Z\rangle}{\langle\alpha, \rho_Z\rangle}.
\end{equation}
If $t=\exp(H)\in T$ and $\mu$ is a weight, write $t^\mu= e^{\mu(H)}$.
Now, the \emph{Weyl character formula} in \cite[Prop.~2.3]{ChenevierRenard15} tells us that, for any $\Lambda\in \PP^{+}(G)$ and $t\in T$, one has 
\begin{equation}\label{eq2:chipiLambdat}
\chi_{\pi_\Lambda} (t) = \frac{\displaystyle\sum_{\sigma\in W^Z} \varepsilon(\sigma) \; t^{\sigma(\Lambda+\rho)-\rho} \; p_Z(\sigma(\Lambda+\rho)-\rho_Z)}{\displaystyle\prod_{\alpha\in\Phi^+\smallsetminus \Phi_Z^+}(1-t^{-\alpha})}.
\end{equation}

We now proceed with the proof of  Proposition~\ref{prop2:F_Gamma-rational}. 
For each $\gamma \in \Gamma$ we fix $t_\gamma \in T$, conjugate to $\gamma$, and we abbreviate $\Phi_\gamma^+=\Phi_{Z_\gamma}^+$. 
By substituting \eqref{eq2:chipiLambdat} in \eqref{eq2:F_Gamma(z)2}, we obtain that 
\begin{equation}\label{eq2:F_Gamma(z)3}
F_{\Gamma}(z)
    = \frac{1}{|\Gamma|}\sum_{\gamma\in\Gamma}
    \Big({ \prod_{\alpha\in\Phi^+ \smallsetminus \Phi_Z^+}(1-t_\gamma^{-\alpha})^{-1}}\Big
	)
    \sum_{\sigma\in W^Z} \varepsilon(\sigma)
    \,c_{\gamma,\sigma}(z),
\end{equation}
where
\begin{equation}\label{eq2:a_gamma,sigma(z)}
c_{\gamma,\sigma}(z) = \sum_{k\in \N_0}  {t_\gamma}^{\sigma(\Lambda_k+\rho)-\rho} \,p_Z(\sigma(\Lambda_k+\rho)-\rho_Z)\,  z^k.
\end{equation}
The next goal will be to show that $c_{\gamma,\sigma}(z)$ is a particular rational function.

\begin{lemma}\label{lem2:c_gamma,sigma}
In the notation above, we have that
\begin{equation}
c_{\gamma,\sigma}(z) = \frac{p_{\gamma,\sigma}(z)}{ \left(1-t_\gamma^{\sigma(\dir)}z\right)^{\NNg +1}}, 
\end{equation}
for some complex polynomial $p_{\gamma,\sigma}(z)$ of degree $\leq \NNg$.
\end{lemma}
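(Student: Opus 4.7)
The plan is to reduce the computation of $c_{\gamma,\sigma}(z)$ to the elementary fact that $\sum_{k\geq 0} P(k)w^k$ is a rational function when $P$ is a polynomial. The key observation is that, as $k$ varies, both factors appearing in the summand depend on $k$ in a very controlled way: exponentially in the $t_\gamma^{\sigma(\Lambda_k+\rho)-\rho}$ term, and polynomially in the $p_Z$ term.

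First I would write $\sigma(\Lambda_k+\rho) = \sigma(\Lambda_0+\rho) + k\,\sigma(\dir)$ and factor the exponential part as
\begin{equation*}
t_\gamma^{\sigma(\Lambda_k+\rho)-\rho} = A_{\gamma,\sigma}\, \zeta_{\gamma,\sigma}^k,
\qquad\text{where } A_{\gamma,\sigma} := t_\gamma^{\sigma(\Lambda_0+\rho)-\rho},\ \zeta_{\gamma,\sigma} := t_\gamma^{\sigma(\dir)}.
\end{equation*}
Next, inspecting the definition \eqref{eq2:P_Z(v)} of $p_Z$, each of the $\NNg = |\Phi_\gamma^+|$ factors of $p_Z(\sigma(\Lambda_k+\rho)-\rho_Z)$ is $\langle\alpha,\rho_Z\rangle^{-1}\bigl(\langle\alpha,\sigma(\Lambda_0+\rho)\rangle + k\langle\alpha,\sigma(\dir)\rangle\bigr)$, which is affine in $k$. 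Hence $P(k) := p_Z(\sigma(\Lambda_k+\rho)-\rho_Z)$ is a polynomial in $k$ of degree $d \le \NNg$.

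Substituting, I get
\begin{equation*}
c_{\gamma,\sigma}(z) = A_{\gamma,\sigma}\sum_{k\ge 0} P(k)\,(\zeta_{\gamma,\sigma}z)^k.
\end{equation*}
Now I would invoke the standard identity (provable by repeatedly applying $w\tfrac{d}{dw}$ to the geometric series $\sum w^k = (1-w)^{-1}$): for any polynomial $P$ of degree $d$,
\begin{equation*}
\sum_{k\ge 0} P(k)\, w^k = \frac{R(w)}{(1-w)^{d+1}}
\end{equation*}
with $R$ a polynomial of degree at most $d$. Applying this with $w=\zeta_{\gamma,\sigma}z$ yields a numerator polynomial of degree $\le d$ and denominator $(1-\zeta_{\gamma,\sigma}z)^{d+1}$.

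Finally, to match the form requested in the statement, I multiply the numerator and denominator by $(1-\zeta_{\gamma,\sigma}z)^{\NNg - d}$, producing a polynomial $p_{\gamma,\sigma}(z)$ of degree at most $d + (\NNg - d) = \NNg$ and the desired denominator $(1-\zeta_{\gamma,\sigma}z)^{\NNg+1}$. There is no real obstacle here; the only point requiring care is the degree bookkeeping at the end to confirm $\deg p_{\gamma,\sigma} \le \NNg$, which is why the statement allows the looseness $d \le \NNg$ in the polynomial degree from the $p_Z$ factor.
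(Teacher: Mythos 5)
Your proposal is correct and follows essentially the same route as the paper: factor the character term as $t_\gamma^{\sigma(\Lambda_0+\rho)-\rho}\,(t_\gamma^{\sigma(\dir)})^k$, observe that the $p_Z$ factor is a polynomial in $k$ of degree at most $\NNg$, and reduce to the identity $\sum_{k\ge0}P(k)w^k = R(w)/(1-w)^{\deg P+1}$. The only (immaterial) difference is that you justify that identity via the operator $w\,\tfrac{d}{dw}$ while the paper runs an induction on the exponent $j$ using binomial coefficients; your degree bookkeeping at the end matches the paper's.
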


\begin{proof}
On the one hand,
\begin{align}\label{eq2:p_Z(sigma(Lambda_k+rho))}
p_Z(\sigma(\Lambda_k+\rho)-\rho_Z)
    &= \prod_{\alpha\in\Phi_Z^+} \frac{\langle\alpha, \sigma(\Lambda_k+\rho)\rangle}{\langle\alpha, \rho_Z\rangle}
     = \prod_{\alpha\in\Phi_Z^+} \frac{\langle\sigma(\alpha), k\dir+\Lambda_0+\rho\rangle}{\langle\alpha, \rho_Z\rangle}\\
    &= \Big(\prod_{\alpha\in\Phi_Z^+} \frac{1}{\langle \alpha,\rho_Z\rangle}\Big)
    \Big(\prod_{\alpha\in\Phi_Z^+}
    \big(k\langle \sigma(\alpha),\dir\rangle +\langle \sigma(\alpha),\Lambda_0+ \rho\rangle\big)\Big),
    \notag
\end{align}
which is a polynomial in $k$ of degree $\NNg$ with complex coefficients.
We thus write $$p_Z(\sigma(\Lambda_k+\rho)-\rho_Z) = \sum_{j=0}^{\NNg} b_j {k}^j.$$

On the other hand, we have that $t_\gamma^{\sigma(\Lambda_k+\rho)-\rho} =t_\gamma^{\sigma(\Lambda_0+\rho)-\rho}\, t_\gamma^{k\sigma(\dir)}$. 
Hence
\begin{equation}
c_{\gamma,\sigma}(z)
    = t_\gamma^{\sigma(\Lambda_0+\rho)-\rho}\sum_{j=0}^{\NNg} b_j
    \sum_{k\in \N_0}k^j (t_\gamma^{\sigma(\dir)}z)^k.
\end{equation} 
We claim that for any $0\leq j\leq \NNg$,
\begin{equation}\label{eq2:claim}
\sum_{k\geq0}k^j t_\gamma^{k\sigma(\dir)}z^k = \frac{p_j(z)}{\left(1-t_\gamma^{\sigma(\dir)
}z\right)^{\NNg+1}}
\end{equation}
for some polynomial $p_j(z)$ of degree at most $\NNg$. Once this is done, the lemma clearly follows.

We next prove our claim.
The assertion is clear for $j=0$, by using the geometric series.

Assume now that \eqref{eq2:claim} holds for some $j<\NNg$.
Since $j!\binom{k+j}{k}=k^j + \sum_{l=0}^{j-1}c_lk^l$ for some $c_l\in \Z$, we obtain that
\begin{align}
\sum_{k\geq0} k^j y^k
    &= j!\sum_{k\geq0} \binom{k+j}{k} y^k - \sum_{l=0}^{j-1} c_l\sum_{k\geq0} k^l y^k
    = \frac{j!}{(1-y)^{j+1}}- \sum_{l=0}^{j-1} c_l\sum_{k\geq0} k^l y^k.
\end{align}
Now, since $(1-y)^{-(j+1)} = \dfrac{(1-y)^{\NNg-j}}{(1-y)^{\NNg +1}}$, by substituting $y=t_\gamma^{\sigma(\dir)}z$, then \eqref{eq2:claim} follows by induction.
\end{proof}

Thus, since $t_\gamma^q=\gamma^q=1$,  $t_\gamma^{\sigma(\dir) }$ is a root of unity of order a divisor of $q$, then Lemma~\ref{lem2:c_gamma,sigma} yields 
\begin{align} \label{eq2:c_gamma,sigma(z)2}
c_{\gamma,\sigma}(z)
    &= \frac{p_{\gamma,\sigma}(z)}{ \left(1-t_\gamma^{\sigma(\dir)}z\right)^{\NNg+1}}
    = \frac{\displaystyle p_{\gamma,\sigma}(z) \, \left(1-z^q\right)^{\NN -\NNg}\,\prod_{\xi^q=1,\,\xi\neq t_\gamma^{\sigma(\dir)}} (1-\xi z)^{\NNg+1}}{ \left(1-z^q\right)^{\NN+1}}.
\end{align}
Since $\NN\geq\NNg$ for all $\gamma\in\Gamma$, the degree of the polynomial in the numerator is less than or equal to 
$$
\NNg + q(\NN-\NNg) +(q-1)(\NNg+1)  < q(\NN+1),
$$
hence Proposition~\ref{prop2:F_Gamma-rational} follows from \eqref{eq2:F_Gamma(z)3} and \eqref{eq2:c_gamma,sigma(z)2}.

In the expression for $F_\Gamma(z)$ as a rational function in  Proposition~\ref{prop2:F_Gamma-rational}, the numerator $p(z)$ and the denominator $(1-z^q)^{\NN+1}$ usually share some factors. 
We next discuss a simple example illustrating this situation. 

\begin{example}\label{ex2:SU(2)}
Let $G=\SU(2)$, which has rank one. 
In this case, $\Phi^+=\{\alpha:=\varepsilon_1- \varepsilon_2\}$, $\widehat G = \{\pi_k:=\pi_{\alpha k/2}:k\in\N_0\}$,  $\dim V_{\pi_{k}}=k+1$, and every weight space in $V_{\pi_{k}}$ is one-dimensional having the form $\frac{k-2h}{2}\alpha$ for some $0\leq h\leq k$. 
Fix an even positive integer $q$ and set 
\begin{equation}
\Gamma=\left\{
\begin{pmatrix}
e^{2\pi i h/q}\\ & e^{-2\pi i h/q}
\end{pmatrix}: h\in\Z\right\}.
\end{equation}

We claim that $\dim V_{\pi_{k}}^\Gamma = 1+2\lfloor \tfrac{k}{q}\rfloor$ for any $k\geq0$.
Indeed, since $\Gamma$ acts by scalars in each weight space and these scalars are $q$-th roots of unity, we have that $\dim V_{\pi_{k}}^\Gamma$ is given by the number of weights in $V_{\pi_{k}}$ of the form $\frac{k-2h}2 \alpha$ for $0\leq h\leq k$ divisible by $q$. 
Consequently,
\begin{align}
F_\Gamma(z) 
&=\sum_{k\geq0} (1+\lfloor \tfrac{k}{q}\rfloor )\,z^k 
 =\sum_{j=0}^{q-1}\sum_{m\geq0} (1+\lfloor \tfrac{mq+j}{q}\rfloor )\,z^{mq+j} \\ \notag
&=\frac{1-z^q}{1-z} \sum_{m\geq 0} (1+2m)\,z^{qm} 
=\frac{1+z^q}{(1-z)(1-z^q)} 
=\frac {1+z+\dots+z^{2q-1}}{(1-z^q)^2}.
\end{align}
Then $\frac{1-z^{2q}}{1-z}$ and $(1-z^q)^2$ are the numerator and denominator of $F_\Gamma(z)$ respectively, as stated in Proposition~\ref{prop2:F_Gamma-rational} and the polynomial $\frac{1-z^q}{1-z} =1+z+\dots+z^{q-1}$ is their greatest common divisor. 

Kostant~\cite{Kostant85} computed explicitly $F_{\Gamma}(z)$ for the (obvious) string $\{\pi_k:k\geq0\}$ in $\SU(2)$ as in Example~\ref{ex2:SU(2)} for every finite subgroup $\Gamma$ of $\SU(2)$. 
\end{example}

\section{Strong multiplicity one theorems} \label{sec:SMOT}
The goal of this section is to prove Theorems~\ref{thm1:SMOTtau} and \ref{thm1:finiteSMOTtau}.
We first recall the context: $G$ is a compact connected semisimple Lie group, $K$ a closed subgroup of $G$, and $\Gamma$ a finite subgroup of $G$.
Furthermore, we fix a maximal torus $T$ in $G$ and a positive system in the associated root system.  
We denote by $\Phi^+$ the set of positive roots.
By Frobenius reciprocity, we have that
\begin{equation}\label{eq3:Frobenius}
n_\Gamma(\pi) = 
\dim \Hom_G(\pi, L^2(\Gamma\ba G)) 
=\dim \Hom_\Gamma(1_\Gamma, \pi|_K) 
=\dim V_\pi^\Gamma
\end{equation}
for every $\pi\in\widehat G$.

The main tool in what follows will be the generating function associated to a string of representations (see \eqref{eq2:F_Gamma(z)}) and the rational expression obtained in  Proposition~\ref{prop2:F_Gamma-rational}.

\begin{proposition} \label{lem3:SMOTstrings}
Let $\Gamma$ be a finite subgroup of $G$, let $q$ be any positive integer divisible by $|\Gamma|$, and let $\Lambda_0$ and $\dir$ be $G$-integral dominant weights.
If $\mathcal A\subset \N_0$ satisfies that 
\begin{equation}\label{eq3:finitecondition}
|\mathcal A\cap (j+q\Z)| \geq \NN+1  \quad \text{for all } 0\leq j\leq q-1, 
\end{equation}
then the set of multiplicities $n_{\Gamma}(\pi_{\Lambda_0+k\dir})$ for $k \in \mathcal A$ determine the whole sequence $n_{\Gamma}(\pi_{\Lambda_0+k\dir})$ for $k\in \N_0$.	
Moreover, for any $k\in \N_0$,  $n_{\Gamma}(\pi_{\Lambda_0+k\dir})$ can be linearly expressed in terms of the $n_\Gamma (\pi_{\Lambda_h})$ for
$h\in \mathcal A$.
In particular, if $\Gamma$ and $\Gamma'$ are finite subgroups of $G$ with $q$ divisible by $|\Gamma|$ and $|\Gamma'|$ such that $n_{\Gamma}(\pi_{\Lambda_0+k\dir}) = n_{\Gamma'}(\pi_{\Lambda_0+k\dir})$ for all $k\in \mathcal A$, then this equality holds for every $k\geq0$.
\end{proposition}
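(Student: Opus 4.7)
The plan is to exploit the rational form of $F_\Gamma(z)$ given by Proposition~\ref{prop2:F_Gamma-rational} by decomposing it along residue classes modulo $q$, so that the multiplicities $n_\Gamma(\pi_{\Lambda_0+k\dir})$ restricted to each class $k\equiv j \pmod q$ become values of a polynomial in $k$ of degree at most $\NN$; then standard Lagrange interpolation finishes the proof.

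First I would observe that, although Proposition~\ref{prop2:F_Gamma-rational} is stated with denominator $(1-z^{|\Gamma|})^{\NN+1}$, the hypothesis that $|\Gamma|$ divides $q$ allows one to clear the denominator to $(1-z^q)^{\NN+1}$: since $(1-z^{|\Gamma|})$ divides $(1-z^q)$, multiplying numerator and denominator by $\bigl((1-z^q)/(1-z^{|\Gamma|})\bigr)^{\NN+1}$ yields $F_\Gamma(z) = \tilde p(z)/(1-z^q)^{\NN+1}$ with $\deg \tilde p < q(\NN+1)$. Next I would partition the series according to residues mod $q$, writing
\begin{equation*}
F_\Gamma(z) = \sum_{j=0}^{q-1} z^j\, G_j(z^q), \qquad G_j(w)=\sum_{m\geq 0} a_{m,j}\,w^m,
\end{equation*}
where $a_{m,j} := n_\Gamma(\pi_{\Lambda_0+(mq+j)\dir})$, and correspondingly $\tilde p(z) = \sum_{j=0}^{q-1} z^j p_j(z^q)$. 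The degree bound on $\tilde p$ forces $\deg p_j \leq \NN$ for each $j$, so $G_j(w) = p_j(w)/(1-w)^{\NN+1}$.

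Third, using $1/(1-w)^{\NN+1} = \sum_{m\geq 0}\binom{m+\NN}{\NN}w^m$ and reading off coefficients, I would conclude that there exist polynomials $P_j(X) \in \C[X]$ of degree at most $\NN$ such that $a_{m,j} = P_j(m)$ for \emph{every} $m\in \N_0$. The hypothesis $|\mathcal A \cap (j+q\Z)| \geq \NN+1$ then provides $\NN+1$ distinct integers $m$ at which the value $P_j(m)$ is known; since $\deg P_j \leq \NN$, Lagrange interpolation determines $P_j$ uniquely and expresses $P_j(m)$ for any $m\in\N_0$ as an explicit linear combination of the $\NN+1$ known values, yielding both the determination and the linear-expression assertions. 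The final claim is immediate: if $\Gamma, \Gamma'$ give equal multiplicities on $\mathcal A$, the associated polynomials $P_j^\Gamma$ and $P_j^{\Gamma'}$ of degree $\leq \NN$ agree at $\NN+1$ points, hence coincide identically, so $n_\Gamma(\pi_{\Lambda_0+k\dir}) = n_{\Gamma'}(\pi_{\Lambda_0+k\dir})$ for every $k\geq 0$.

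The main obstacle, I expect, is the bookkeeping in the first two steps: one must track the degree bound $q(\NN+1)$ through the change of denominator so that each piece $p_j$ indeed has degree at most $\NN$. Without this bound, the interpolation would require more values than the hypothesis supplies. Once this degree control is in place the rest is essentially linear algebra on polynomials.
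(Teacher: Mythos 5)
Your proof is correct and follows essentially the same route as the paper: both extract from $F_\Gamma(z)=\tilde p(z)/(1-z^q)^{\NN+1}$ the fact that along each residue class $k=mq+j$ the multiplicities are $\sum_{h=0}^{\NN} b_{hq+j}\binom{m-h+\NN}{\NN}$, a polynomial of degree at most $\NN$ in $m$, and then recover it from $\NN+1$ sample points. The only cosmetic differences are that you phrase the recovery as Lagrange interpolation where the paper solves the linear system for the numerator coefficients via a non-singular matrix, and that you make explicit the (correct) passage from denominator $(1-z^{|\Gamma|})^{\NN+1}$ to $(1-z^q)^{\NN+1}$, which the paper leaves implicit.
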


\begin{proof}
Set $\Lambda_k=\Lambda_0+k\dir$ for any $k\geq0$. By  Proposition~\ref{prop2:F_Gamma-rational} and \eqref{eq3:Frobenius}, there exists a polynomial $p(z) = p_{\dir, \Lambda_0, \Gamma}(z)$ of degree less than $q(\NN+1)$ such that
$$
F_{\Gamma}(z)=\sum_{k\geq0} n_{\Gamma}(\pi_ {\Lambda_k}) z^k = \frac{p(z)}{(1-z^q)^{\NN +1}}.
$$
We write $p(z) = \sum_{k=0}^{q(\NN +1)-1}b_k z^k$.
We first show that one has an expression 
\begin{equation}\label{eq3:n_lq+j}
n_{\Gamma}(\pi_ {\Lambda_{mq+j}}) = \sum_{h=0}^{\NN } b_{hq+j} \tbinom{m-h+\NN }{\NN }
\end{equation}
for any $0\leq j<q$ and for any $m\geq0$. Here $\tbinom{m-h+\NN }{\NN }=0$ for $h>m$, by convention, so the sum actually runs over $0\leq h\leq \min(\NN ,m)$.
To show \eqref{eq3:n_lq+j}, since $\frac{1}{(1-y)^{j+1}} = \sum_{k\geq0} \binom{k+j}{k} y^k$ for any $j \in \N_0$, then 
\begin{align*}
F_{\Gamma}(z)
    &= p(z)(1-z^q)^{-(\NN +1)} = \left(\sum_{j=0}^{q-1} \sum_{h=0}^{\NN } b_{hq+j} z^{hq+j} \right) \left(\sum_{k\geq0} \tbinom{k+\NN }{\NN } z^{kq}\right) \\
    &= \sum_{j=0}^{q-1} \sum_{m\geq0}  \left(\sum_{h=0}^{\NN } b_{hq+j} \tbinom{m-h+\NN }{\NN }\right) z^{mq+j}.
\end{align*}

We fix $0\leq j<q$. 
Since $|\mathcal A\cap (j+q\Z)|\geq \NN+1$, there are $m_0,\dots,m_{\NN }$ different non-negative integers such that $m_iq+j\in\mathcal A$ for all $i$.
The square matrix
$$
\Big(\tbinom{m_i-h+\NN }{\NN } \Big)_{i,h=0}^{\NN}
$$
is clearly non-singular, so the system of $\NN +1$ linear equations with $\NN +1$ unknowns $b_j,\dots,b_{q\NN +j}$,
$$
n_{\Gamma}(\pi_ {\Lambda_{m_iq+j}}) 
= \sum_{h=0}^{\NN }  \tbinom{m_i-h+\NN }{\NN }\,b_{hq+j}
\qquad\text{ for $i=0,\dots,\NN $,}
$$
has a unique solution.
Consequently, the coefficients
$b_{hq+j}$ for $0\leq h\leq \NN $ can be linearly expressed in terms of the multiplicities 
$n_{\Gamma}(\pi_ {\Lambda_k})$ for $k\in \mathcal A\cap (j+q\Z)$.

Since this holds for every $j$, we conclude that the $b_{k}$ for every $0\leq k< q(\NN+1)$ are determined by the $n_{\Gamma}(\pi_ {\Lambda_k})$'s for $k\in \mathcal A$, and hence also $p(z)$ as well as  $F_\Gamma(z)$, are determined. Therefore all the $\{n_\Gamma(\pi_k): k\in \N_0\}$ are  linearly determined. 
\end{proof}

Proposition~\ref{lem3:SMOTstrings} gives a finite strong multiplicity one result
for a string of representations. 
In the next result we show a refinement, by  again
proving strong multiplicity one, now  
valid with a possible infinite set of exceptions of sufficiently small density.

\begin{corollary}\label{cor3:density2Gammas}
Let $\Gamma,\Gamma'$ be finite subgroups of $G$, let $q$ be the least common multiple between $|\Gamma|$ and $|\Gamma'|$, and let $\Lambda_0$ and $\dir$ be $G$-integral dominant weights.
If
\begin{equation}\label{eq:densitydosGammas}
	\limsup_{t\to\infty} \frac{|\{0\leq k\leq t: n_{\Gamma}(\pi_{\Lambda_0+k\dir}) \neq n_{\Gamma'}(\pi_{\Lambda_0+k\dir})\}|}{t} <\frac{1}{q},
\end{equation}
then $n_{\Gamma}(\pi_{\Lambda_0+k\dir})= n_{\Gamma'}(\pi_{\Lambda_0+k\dir})$ for all $k\geq0$. 
In particular, if $n_{\Gamma}(\pi_{\Lambda_0+k\dir}) = n_{\Gamma'}(\pi_{\Lambda_0+k\dir})$ for all but finitely many $k\geq0$, then the equality holds for every $k\geq0$. 
\end{corollary}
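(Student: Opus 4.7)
The plan is to reduce the corollary to Proposition~\ref{lem3:SMOTstrings} by a density-counting argument. Let $q=\op{lcm}(|\Gamma|,|\Gamma'|)$ so that $q$ is divisible by both $|\Gamma|$ and $|\Gamma'|$, which puts us in a position to apply Proposition~\ref{lem3:SMOTstrings} to each of the two finite subgroups simultaneously with this common $q$. Define the ``agreement'' set
\begin{equation*}
\mathcal A = \{k\in\N_0:n_\Gamma(\pi_{\Lambda_0+k\dir})=n_{\Gamma'}(\pi_{\Lambda_0+k\dir})\},
\end{equation*}
so that the hypothesis \eqref{eq:densitydosGammas} states exactly that $\limsup_{t\to\infty} |([0,t]\cap\N_0)\smallsetminus\mathcal A|/t < 1/q$.

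The main step is to verify condition \eqref{eq3:finitecondition} for this $\mathcal A$, i.e.\ that $|\mathcal A\cap(j+q\Z)|\geq \NN+1$ for each $0\leq j\leq q-1$. I argue by contradiction: suppose some residue class $j+q\Z$ meets $\mathcal A$ in only finitely many elements. Then $(j+q\Z)\cap[0,t]$ is contained, up to finitely many exceptions, in the complement of $\mathcal A$, so
\begin{equation*}
|\{0\leq k\leq t: k\notin\mathcal A\}| \;\geq\; |(j+q\Z)\cap [0,t]| - O(1) \;=\; \tfrac{t}{q}+O(1).
\end{equation*}
Dividing by $t$ and letting $t\to\infty$ gives $\limsup \geq 1/q$, contradicting \eqref{eq:densitydosGammas}. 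Therefore each $\mathcal A\cap(j+q\Z)$ is in fact infinite, hence in particular has more than $\NN+1$ elements.

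Now $\mathcal A$ satisfies \eqref{eq3:finitecondition} and we may apply Proposition~\ref{lem3:SMOTstrings} to both $\Gamma$ and $\Gamma'$: the finitely many values $\{n_\Gamma(\pi_{\Lambda_0+k\dir}):k\in\mathcal A\}$ determine the full sequence $\{n_\Gamma(\pi_{\Lambda_0+k\dir}):k\in\N_0\}$, and similarly for $\Gamma'$. Since both sequences are determined by the \emph{same} data on $\mathcal A$, where by construction they agree, the two sequences coincide for every $k\geq0$. Finally, if $n_\Gamma(\pi_{\Lambda_0+k\dir})=n_{\Gamma'}(\pi_{\Lambda_0+k\dir})$ for all but finitely many $k$, then the limsup in \eqref{eq:densitydosGammas} equals $0<1/q$, so the same conclusion applies. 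I do not foresee a genuine obstacle; the only point requiring care is the elementary counting estimate showing that a single ``missed'' residue class alone already pushes the density of exceptions up to $1/q$, which is where the strict inequality in \eqref{eq:densitydosGammas} is used.
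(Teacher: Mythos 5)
Your proof is correct and takes essentially the same route as the paper: both verify condition \eqref{eq3:finitecondition} for the agreement set $\mathcal A$ by a residue-class counting argument mod $q$ and then invoke the final assertion of Proposition~\ref{lem3:SMOTstrings}. The only differences are cosmetic --- the paper runs the count directly with an explicit $\varepsilon$, whereas you argue by contradiction (a residue class meeting $\mathcal A$ in at most $\NN$ elements forces exception density at least $1/q$), and your phrase ``the finitely many values'' is a slip since $\mathcal A$ is infinite, but nothing depends on it.
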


\begin{proof}
Set as usual $\Lambda_k=\Lambda_0+ k\dir$ for any $k\geq0$.
We want to show that the subset of $\N_0$ $\mathcal A:=\{k\in \N_0: n_{\Gamma}(\pi_{\Lambda_k}) = n_{\Gamma'}(\pi_{\Lambda_k})\}$  satisfies \eqref{eq3:finitecondition}.
This done, Proposition~\ref{lem3:SMOTstrings} ensures that $n_{\Gamma}(\pi_{\Lambda_k})= n_{\Gamma'}(\pi_{\Lambda_k})$ for all $k\geq0$ as required.  

By the assumption \eqref{eq:densitydosGammas},  $\displaystyle \liminf_{t\to\infty} \frac{|\{0\leq k\leq  t: k\in\mathcal A\}|}{t} > \frac{q-1}{q}$. 
This means that for every $\varepsilon>0$ there is $r_0>0$ such that
\begin{equation}
\frac{|\{k\in\mathcal A:  k< rq(\NN +1)\}|}{rq(\NN +1)} > \frac{q-1+\varepsilon}{q}\qquad \text{for all }r\geq r_0,
\end{equation}
or equivalently,
\begin{equation}\label{eq3:densityproof2}
|\{k\in\mathcal A:  k\leq   rq(\NN +1)\}| > r(q-1+\varepsilon)(\NN+1)
\qquad \text{for all }r\geq r_0.
\end{equation}

Let $r\geq r_0$ and $j_0\in\Z$ satisfying $0\leq j_0<q$. 
We have that 
\begin{align}
	|\{k\in\mathcal A: k< rq(\NN+1)\}| 
	&= \sum_{j=0}^{q-1} |\{k\in\mathcal A: k< rq(\NN+1)\}\cap (j+q\Z)| \\
	&\leq |\mathcal A\cap (j_0+q\Z)| +  (q-1)r(\NN+1) . \notag
\end{align}
Hence, \eqref{eq3:densityproof2} implies that $|\mathcal A\cap (j_0+q\Z)|>r\varepsilon (\NN+1)$ for every $r\geq r_0$.
By taking any $r\geq 1/\varepsilon$ we obtain that $|\mathcal A\cap (j_0+q\Z)|>\NN+1$, as required.  
\end{proof}

We will need the following useful fact.

\begin{lemma}\label{strings}
For any $\tau \in \widehat K$, the set $\widehat G_\tau$ can be written as a union of strings having a common direction $\dir$, that is, 
$\widehat G_\tau= \bigcup_{\Lambda \in \mathcal Q_\tau} \{\pi_ {\Lambda+k\dir}:k\in \N_0\}$ for some subset $\mathcal Q_\tau$ of $\PP^{+}(G)$. 
\end{lemma}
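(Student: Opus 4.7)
The plan is to find a direction $\omega \in \PP^{+}(G)$ such that $\widehat G_\tau$ is closed under the translation $\pi_\Lambda \mapsto \pi_{\Lambda + \omega}$; the lemma will then follow by taking $\mathcal Q_\tau := \{\Lambda\in\PP^{+}(G): \pi_\Lambda \in \widehat G_\tau\}$. I would work with the induced representation $M_\tau := (\C[G] \otimes V_\tau)^K \simeq \bigoplus_\Lambda V_\Lambda^{m_\tau(\Lambda)}$ (Frobenius reciprocity), where $m_\tau(\Lambda) := \dim\Hom_K(V_\tau, V_\Lambda)$, so that $\pi_\Lambda \in \widehat G_\tau$ if and only if $V_\Lambda$ appears in $M_\tau$. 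Realizing $M_\tau$ geometrically as the space of algebraic sections of the $G$-equivariant vector bundle $G \times_K V_\tau$ over the complexified homogeneous space $G^{\C}/K^{\C}$, it carries a module structure over $R := \C[G]^K$ by pointwise multiplication.

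The first key fact I would use: since $G$ is connected and $K^{\C}$ is reductive, $G^{\C}/K^{\C}$ is an irreducible affine variety (by Matsushima's theorem), hence $R$ is an integral domain and $M_\tau$ is torsion-free over $R$ (a section vanishing off the zero set of a nonzero regular function must itself vanish). The second: the $G$-module decomposition $R = \bigoplus_\omega V_\omega^{\dim V_\omega^K}$ shows that the highest weights occurring in $R$ form the spherical monoid $S := \{\omega \in \PP^{+}(G): V_\omega^K \neq 0\}$, and as long as $K \neq G$ the ring $R$ is infinite-dimensional, so $S$ contains some $\omega \neq 0$. (The degenerate case $K = G$ gives $\widehat G_\tau = \{\tau\}$, a one-point string.)

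The main step is then a Borel--Weil style manipulation. Fix a Borel subgroup $B \subset G^{\C}$ compatible with the chosen positive system and pick $\omega \in S\smallsetminus\{0\}$. Choose a nonzero $B$-eigenvector $f \in R$ of weight $\omega$ (i.e., a highest-weight vector in some copy of $V_\omega \subset R$). For any $\Lambda$ with $\pi_\Lambda \in \widehat G_\tau$, choose a nonzero $B$-eigenvector $s$ of weight $\Lambda$ inside the $V_\Lambda$-isotypic component of $M_\tau$. Since multiplication is $G$-equivariant, $fs$ is a $B$-eigenvector of weight $\Lambda + \omega$, and by torsion-freeness $fs \neq 0$. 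Hence $fs$ generates an irreducible submodule isomorphic to $V_{\Lambda + \omega}$ inside $M_\tau$, giving $m_\tau(\Lambda+\omega) \geq 1$ and thus $\pi_{\Lambda+\omega} \in \widehat G_\tau$. Iterating produces $\pi_{\Lambda+k\omega} \in \widehat G_\tau$ for every $k\in\N_0$.

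I expect the trickiest point to be establishing torsion-freeness of $M_\tau$ over $R$, which hinges on interpreting $M_\tau$ as sections of a vector bundle and on the irreducibility of $G^{\C}/K^{\C}$ (a consequence of $G$ being connected). Once this is in hand, the existence of a nonzero $K$-spherical weight and the highest-weight manipulation are both routine.
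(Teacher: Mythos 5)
Your proof is correct and follows essentially the same route as the paper: both arguments pick a nonzero $K$-spherical highest weight $\dir$ as the common direction and reduce the lemma to the closure of $\widehat G_\tau$ under $\Lambda\mapsto\Lambda+\dir$. The only difference is that the paper obtains this closure by citing Kostant's branching theorem \cite[Thm.~3.9]{Kostant04}, whereas you prove it directly by multiplying $B$-eigenvectors in $(\C[G]\otimes V_\tau)^K$ viewed as a torsion-free module over $\C[G]^K$ --- a self-contained version of the same underlying mechanism (and note that irreducibility of $G^{\C}/K^{\C}$ already follows from connectedness of $G^{\C}$; Matsushima's theorem is only needed for affineness, which your torsion-freeness argument does not actually require).
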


\begin{proof}
The left-regular representation of $G$ on $L^2(G/K)$ decomposes 
$$
L^2(G/K) \simeq \bigoplus_{\pi\in\widehat G_{1_K}} (\dim V_\pi^K)\, V_\pi, 
$$
where $1_K$ denotes the trivial representation of $K$.
Let $\dir$ be the highest weight of any non-trivial representation in $\widehat G_{1_K}$. 
Then, for any $\pi_\Lambda\in\widehat G_\tau$, one has that $\pi_{\Lambda+k\dir} \in \widehat G_\tau$ for all $k\geq0$ (see for instance \cite[Thm.~3.9]{Kostant04}).
That is, $\String{\dir,\Lambda} \subset \widehat G_\tau$, and consequently, 
\begin{equation}
\widehat G_\tau = \bigcup_{\Lambda\in \PP^{+}(G): \, \pi_\Lambda\in \widehat G_\tau} \String{\dir,\Lambda},
\end{equation}
which completes the proof.
\end{proof}

We are now in a position to give the proofs of the main theorems. 

\begin{proof}[Proof of Theorem~\ref{thm1:SMOTtau}]
Let $q$ be the least common multiple between $|\Gamma|$ and $|\Gamma'|$. 
Since the set of possible exceptions to equality of multiplicities in $\widehat G_\tau$ is finite, the set of exceptions is also finite in any string associated to $(\Lambda_0,\dir)$.
Now, Corollary~\ref{cor3:density2Gammas} implies that $n_{\Gamma}(\pi) = n_{\Gamma'}(\pi)$ for every $\pi$ in each string, thus equality holds for all $\pi\in\widehat G_\tau$ by Lemma~\ref{strings}. 
\end{proof}

\begin{proof}[Proof of Theorem~\ref{thm1:finiteSMOTtau}]
We first recall the assumptions. 
We have that 
\begin{equation*}
\widehat G_\tau = \bigcup_{i=1}^m \; \{ \pi_{\Lambda_{0,i}+k\dir_i} :k\geq0 \},
\end{equation*}
for some  $G$-integral dominant weights $\Lambda_{0,i}$ and $\dir_i$, for $1\leq i\leq m$.  
Furthermore, $n_{\Gamma}(\pi)=n_{\Gamma'}(\pi)$ for all   $\pi\in\widehat F_{\tau}$, where $\widehat F_\tau$ is a finite subset of $\widehat G_\tau$ such that for each $1\leq i\leq m$ the subset $\mathcal A_i=\{k\in\N_0: \pi_{\Lambda_{0,i}+k\dir_i}\in \widehat F_\tau\}$ of $\N_0$ satisfies \eqref{eq3:finitecondition}.

Fix $1\leq i\leq m$.
From what has been assumed, it follows that $n_{\Gamma}(\pi_{\Lambda_{0,i} + k\dir_i})= n_{\Gamma'}(\pi_{\Lambda_{0,i} + k\dir_i})$ for all $k\in\mathcal A_i$.
Proposition~\ref{lem3:SMOTstrings} forces  $n_{\Gamma}(\pi_{\Lambda_{0,i} + k\dir_i})= n_{\Gamma'}(\pi_{\Lambda_{0,i} + k\dir_i})$ for all $k\geq0$. 
We conclude that $n_{\Gamma}(\pi)= n_{\Gamma'}(\pi)$ for all $\pi\in\widehat G_\tau$, that is, $\Gamma$ and $\Gamma'$ are $\tau$-representation equivalent in $G$.
\end{proof}

\begin{remark} \label{rem:finiteunion}
We end the article by giving some references  in the literature of explicit expressions of $\widehat G_\tau$ as a union of strings. 
Explicit descriptions of the set $\widehat G_\tau$ have been used for different purposes, which makes the task of providing a complete list of references difficult. 

We first assume that $G/K$ is a compact Riemannian symmetric space with $G$ and $K$ connected. 
Let $G'/K$ be the non-compact dual space, let $G'=KAN$ be an Iwasawa decomposition of $G'$, and let $M$ be the centralizer of $A$ in $K$. 
We denote by $\mathfrak g',\mathfrak m,\mathfrak a$ the Lie algebras of $G',M,A$ respectively.
Let $\mathfrak b$ be a Cartan Subalgebra of $\mathfrak m$ such that $\mathfrak b_\C\oplus \mathfrak a_\C$ is a Cartan subalgebra of $\mathfrak g_\C$. 
Let $1_K$ denote the trivial representation of $K$. 

The Cartan--Helgason theorem (see for instance 
\cite[Thm.~9.70]{Knapp-book-beyond}) ensures that $\pi_\Lambda \in\widehat G_{1_K}$ if and only if $\Lambda|_{\mathfrak b}= 0$ and ${\langle \Lambda, \alpha\rangle}/ {\langle \alpha, \alpha\rangle}\in\N_0$ for every positive restricted root $\alpha \in \Phi^+(\mathfrak g',\mathfrak a)$. 
This implies that $\widehat G_{1_K}$ can be written as a disjoint union of strings of representations with the same direction $\dir$ (there are several choices for $\dir$). 
When $G/K$ has real rank one, $\widehat G_{1_K}$ is exactly one string and furthermore, Camporesi~\cite[Thm.~2.4]{Camporesi05JFA} proved (by using \cite{Kostant04}) that there is $\Lambda_{\sigma,\tau}\in\PP^{+}(G)$ for each $(\sigma,\tau)\in\widehat M\times\widehat K$ such that 
\begin{equation}\label{eq:Camporesi}
\widehat G_\tau = \bigcup_{\sigma\in\widehat M: \,\Hom_M(\sigma, \tau|_M)\neq0 } \String{\dir,\Lambda_{\sigma,\tau}} ,
\end{equation}
for any $\tau\in\widehat K$ (see also \cite{Camporesi05Pacific}). 
We note that the union in \eqref{eq:Camporesi} is finite.
Formula \eqref{eq:Camporesi} can be seen as a generalization of the Cartan--Helgason theorem for an arbitrary $\tau\in\widehat K$.

Heckman and van Pruijssen~\cite{HeckmanPruijssen16} extended the previous results to Gelfand pairs of rank one.
In addition to the compact symmetric spaces of real rank one, these spaces include $(G,K)=(\textrm{G}_2, \SU(3))$ and $(\Spin(7), \textrm{G}_2)$. 
\end{remark}

The authors do not expect there are many other situations where the condition \eqref{eq:hypothesis} in Theorem~\ref{thm1:finiteSMOTtau} holds than those given in Remark~\ref{rem:finiteunion}.
We next give two simple instances where this is not the case, i.e.\ $\widehat G_\tau$ cannot be written as a finite union of strings. 

\begin{remark}
Let $G$ be any compact connected semisimple Lie group. 
By setting $K=\{e\}$, one clearly has $\widehat G_{1_K} = \widehat G$. 
We claim that, if the rank $n$ of $G$ is at least $2$ (i.e.\ any $G$ aside of $\SO(3)$ and $\SU(2)$), then $\widehat G$ cannot be written as a finite union of strings. 
Let $\varpi_1,\dots, \varpi_n$ be the fundamental weights of $\Phi^+(\mathfrak g_\C,\mathfrak t_\C)$, thus any element in $\mathcal P^+(G)$ is of the form $a_1\varpi_1+\dots+ a_n\varpi_n$ for some $a_1,\dots,a_n\in \N_0:=\N\cup\{0\}$. 

We consider the subset of $\widehat G$ given by 
\begin{equation*}
\mathcal G:= \widehat G \cap \{\pi_{m_1\varpi_1+m_2\varpi_2}: m_1,m_2\in\N\}, 
\end{equation*}
which clearly has infinitely many elements. 
One has that $\String{\omega,\Lambda_0}\cap \mathcal G$ is finite unless $\omega,\Lambda_0 \in \N_0\varpi_1\oplus \N_0\varpi_2$.
It is clear that we cannot cover $\mathcal G$ with finitely many strings of the form $\String{\omega,\Lambda_0}$ with $\omega\in \N\varpi_2$ and $\Lambda_0\in \N_0\varpi_1\oplus \N_0\varpi_2$.
Moreover, given a string $\String{a\varpi_1+b\varpi_2,c\varpi_1+d\varpi_2}$ with $a,b,c,d\in\N_0$ and $a>0$, the highest weight of an irreducible representation in it is of the form
\begin{equation*}
k(a\varpi_1+b\varpi_2)+(c\varpi_1+d\varpi_2)= (ak+c)\varpi_1+ (bk+d)\varpi_2,
\end{equation*}
thus the quotient
\begin{equation*}
\frac{bk+d}{ak+c} \leq  \frac{bk+d}{ak} <\frac{b}{a} +d
\end{equation*}
is bounded for all $k\in\N$.
Thus, this string cannot reach any element $\pi_{m_1\varpi_1+m_2\varpi_2}\in \mathcal G$ such that $m_2/m_1>b/a +d$.
It follows that $\mathcal G$ cannot be covered by finitely many strings.

A similar situation holds for compact symmetric spaces $G/K$ of real rank at least $2$. 
For instance, we set $G=\SO(n)$ and $K=\SO(n-2)\times\SO(2)$ for some $n\geq5$. 
Under the standard choice of $\Phi^+(G,K)$ (e.g.\ as in \cite[\S{}C.1]{Knapp-book-beyond}), we get 
\begin{equation}
\widehat G_{1_K}= \{\pi_{a\varpi_1+b\varpi_2}: a,b\in\N_0\},
\end{equation}
where $\varpi_1,\varpi_2,\dots$ denote the fundamental weights. 
Now, it is clear that the same argument as above shows that $\widehat G_{1_K}$ cannot be written as a finite union of strings. 
\end{remark}

\bibliographystyle{plain}

\end{document}